\newcommand{\de}{\partial}
\newcommand{\epsi}{\varepsilon}
\newcommand{\metricsobolev}{W^{1,p}(\Omega;X)}
\newcommand{\gengrad}[1]{\abs{\nabla#1}}
\newcommand{\gendir}[1]{\abs{\partial_\nu#1}}
\newcommand{\abs}[1]{{\left|{#1}\right|}}
\newcommand{\R}{\mathbb{R}}
 \newcommand{\N}{\mathbb{N}}
\newcommand{\ptdef}{\mathrel{\mathop:}=}
\def\XXint#1#2#3{{\setbox0=\hbox{$#1{#2#3}{\int}$}
     \vcenter{\hbox{$#2#3$}}\kern-.5\wd0}}
\def\Xint#1{\mathchoice
   {\XXint\displaystyle\textstyle{#1}}%
   {\XXint\textstyle\scriptstyle{#1}}%
   {\XXint\scriptstyle\scriptscriptstyle{#1}}%
   {\XXint\scriptscriptstyle\scriptscriptstyle{#1}}%
   \!\int}
\def\dashint{\Xint-}
\def\restriction#1#2{\mathchoice
              {\setbox1\hbox{${\displaystyle #1}_{\scriptstyle #2}$}
              \restrictionaux{#1}{#2}}
              {\setbox1\hbox{${\textstyle #1}_{\scriptstyle #2}$}
              \restrictionaux{#1}{#2}}
              {\setbox1\hbox{${\scriptstyle #1}_{\scriptscriptstyle #2}$}
              \restrictionaux{#1}{#2}}
              {\setbox1\hbox{${\scriptscriptstyle #1}_{\scriptscriptstyle #2}$}
              \restrictionaux{#1}{#2}}}
\def\restrictionaux#1#2{{#1\,\smash{\vrule height .8\ht1 depth .85\dp1}}_{\,#2}} 
\numberwithin{equation}{section}
\date{\today}
\newtheorem{thm}{Theorem}[section]
\newtheorem{proposition}[thm]{Proposition}
\newtheorem{lem}[thm]{Lemma}
\theoremstyle{definition} 
\newtheorem{definition}{Definition}[section]
\title[$p$-energy of metric space valued Sobolev maps]{A representation formula for the $p$-energy of metric space valued Sobolev maps}
\author{Philippe Logaritsch}
\address{Institut f\"ur Mathematik der Universit\"at Z\"urich,
Winterthurerstr.~190, CH-8057 Z\"urich (Switzerland)}
\email{logaritsch@access.uzh.ch}
\author{Emanuele Spadaro}
\address{Max-Planck-Institut f\"ur Mathematik in den Naturwissenschaften\\ Inselstr.~1, D-04103 Leipzig (Germany)}
\email{spadaro@mis.mpg.de}
\begin{document}

\begin{abstract}
We give an explicit representation formula for the $p$-energy of Sobolev maps with values in a metric space as defined by Korevaar and Schoen (Comm.~Anal.~Geom.~{\bf 1} (1993), no.~3-4, 561--659). The formula is written in terms of the Lipschitz compositions introduced by Ambrosio (Ann.~Scuola Norm.~Sup. Pisa Cl.~Sci.~(4) {\bf 3} (1990), n.~17, 439--478), thus further relating the two different definitions considered in the literature.
\end{abstract}

\maketitle

\section{Introduction}
In this short note we show an explicit representation formula for the $p$-energy of weakly differentiable maps with values in a separable complete metric space, thus giving a contribution to the equivalence between different theories considered in the literature.

Since the early 90's, weakly differentiable functions with values in singular spaces have been extensively studied in connection with several questions in mathematical physics and geometry (see, for instance, \cite{Ambrosio, Ambrosio_Kirchheim, Chiron, FHK, GrSc, Hajlasz, Heinonen, HKST, Jost, Jost2, Jost3, KS, KS_2, Ohta, Reshetnyak_1, Reshetnyak_2, Reshetnyak_3, Shanmugalingam}).
Among the different approaches which have been proposed, we recall here the ones by Korevaar and Schoen \cite{KS} and Jost \cite{Jost} based on two different expressions of approximate energies; that by Ambrosio \cite{Ambrosio} and Reshetnyak \cite{Reshetnyak_1} using the compositions with Lipschitz functions; the Newtonian--Sobolev spaces \cite{HKST}; and the Cheeger-type Sobolev spaces \cite{Ohta}.

As explained by Chiron \cite{Chiron}, all these notions coincide when the domain of definition is an open subset of $\R^n$ (or a Riemannian manifold) and the target is a complete separable metric space $X$ (contributions to the proof of these equivalences have been given in \cite{Chiron, HKST, Reshetnyak_2, Shanmugalingam}).
Nevertheless, the energies associated with these different approaches do not coincide in general (we refer again to \cite{Chiron} for a detailed discussion about the pairwise comparisons).
For instance, the Dirichlet energies defined by Korevaar and Schoen \cite{KS} and Jost \cite{Jost} are a generalization of the classical harmonic energy of maps with values in a Riemannian manifold; while the natural energy associated with the definition given by Reshetnyak does not coincide with the Dirichlet energy, but rather corresponds to the integral of the square of the operator norm of $\nabla u$ for maps with values in $\R^m$ (see, for instance, \cite{Reshetnyak_3}).
On the other hand, even if not the right generalization, the energy considered by Reshetnyak has a representation formula given by a supremum of compositions with Lipschitz functions, while the harmonic energy by Korevaar--Schoen and Jost is expressed by a limiting process which does not lead to an explicit formula.

In some applications an expression for the energy may be actually desirable. In a recent work by De Lellis and the second author \cite{DS1}, a connection between the two approaches has been found in the special case of functions with values in the metric space of multiple points. Indeed, a formula for the Dirichlet energy of Almgren's $Q$-valued functions in terms of Lipschitz compositions is in fact the starting point to revisit the regularity theory and develop a new approach.

Here we show that such a link can be found in general, i.e.~the $p$-energy $E^p(u)$ introduced by Korevaar and Schoen \cite{KS} can be actually expressed in terms of the compositions with the distance functions considered by Reshetnyak, thus leading to an explicit formula for the energy density (see the next sections for precise definitions).

\begin{thm}\label{t.main}
Let $(X,d)$ be a separable, complete metric space, $p\in[1,\infty[$ and $u\in W^{1,p}(\Omega;X)$, with $\Omega\subset\R^n$ open and bounded. Then, the $p$-energy of $u$ have the following explicit representation:
\begin{equation}\label{e.representation}
E^p(u) = \int_\Omega \left(\dashint_{S^{n-1}}\abs{\partial_\nu u}^p(x)\,\, \mathrm{d}\mathcal{H}^{n-1}(\nu)\right)\mathrm{d}\mathcal{L}^n(x),
\end{equation}
where
\begin{equation}\label{e.partial}
 \abs{\partial_\nu u}(x) = \sup_{\xi\in \mathscr{D}} |\nu\cdot\nabla (d( u(x), \xi))| \quad\text{for a.e. } x\in \Omega,
\end{equation}
and $\mathscr{D}\subset X$ is any countable dense subset.
\end{thm}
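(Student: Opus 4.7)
The plan is to reduce \eqref{e.representation} to a pointwise identification of densities. The Korevaar--Schoen construction supplies, for every $u\in W^{1,p}(\Omega;X)$, a \emph{directional energy density} $\abs{u_*(\nu)}(x)$ satisfying the integral representation
\begin{equation*}
E^p(u)=\int_\Omega\dashint_{S^{n-1}}\abs{u_*(\nu)}^p(x)\,\mathrm{d}\mathcal{H}^{n-1}(\nu)\,\mathrm{d}\mathcal{L}^n(x).
\end{equation*}
It therefore suffices to prove the pointwise identity $\abs{u_*(\nu)}(x)=\abs{\partial_\nu u}(x)$ for $\mathcal{L}^n$-a.e.\ $x\in\Omega$ and every $\nu\in S^{n-1}$, which I would split into two separate inequalities.

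The easy inequality $\abs{\partial_\nu u}(x)\le\abs{u_*(\nu)}(x)$ rests on the $1$-Lipschitzness of the distance function. For each $\xi\in X$ the composition $v_\xi(x):=d(u(x),\xi)$ lies in $W^{1,p}(\Omega)$ with the pointwise bound $\abs{v_\xi(y)-v_\xi(x)}\le d(u(y),u(x))$, so the Korevaar--Schoen approximate $p$-energies of $v_\xi$ are dominated by those of $u$. Passing to the limit yields $\abs{\nu\cdot\nabla v_\xi(x)}\le\abs{u_*(\nu)}(x)$ a.e., and since $\mathscr{D}$ is countable the null set can be chosen independently of $\xi$, so the supremum in \eqref{e.partial} inherits the bound.

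For the reverse inequality, I would start from the elementary identity
\begin{equation*}
d(u(y),u(x))=\sup_{\xi\in\mathscr{D}}\abs{v_\xi(y)-v_\xi(x)},
\end{equation*}
which holds with ``$\ge$'' by $1$-Lipschitzness of $d(\cdot,\xi)$ and ``$\le$'' by approximating $\xi=u(x)$ by elements of $\mathscr{D}$. Combined with Sobolev slicing, for $\mathcal{H}^{n-1}$-a.e.\ line $L$ parallel to $\nu$ and every $\xi\in\mathscr{D}$ simultaneously, $v_\xi|_L$ is absolutely continuous with derivative $\nu\cdot\nabla v_\xi$, so that for $x\in L$ and small $t>0$
\begin{equation*}
\abs{v_\xi(x+t\nu)-v_\xi(x)}\le\int_0^t\abs{\nu\cdot\nabla v_\xi}(x+s\nu)\,\mathrm{d}s\le\int_0^t\abs{\partial_\nu u}(x+s\nu)\,\mathrm{d}s.
\end{equation*}
The key feature is that the right-hand side is independent of $\xi$; taking the supremum over $\xi\in\mathscr{D}$ on the left therefore gives $d(u(x+t\nu),u(x))\le\int_0^t\abs{\partial_\nu u}(x+s\nu)\,\mathrm{d}s$. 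Dividing by $t$ and sending $t\to 0^+$, invoking on the left the identification of $\abs{u_*(\nu)}(x)$ with the one-sided metric derivative of $u$ along a.e.\ slicing line, and on the right Lebesgue differentiation, produces $\abs{u_*(\nu)}(x)\le\abs{\partial_\nu u}(x)$.

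The main obstacle I expect is precisely this last identification: while $\abs{u_*(\nu)}$ is produced by the Korevaar--Schoen construction as a weak-measure density, interpreting it pointwise as a genuine one-dimensional metric derivative on a.e.\ slicing line requires either the refined one-dimensional Korevaar--Schoen analysis or, equivalently, Kirchheim's metric differentiability theorem applied to $u|_L$. Once that pointwise interpretation is secured, the combination of the $1$-Lipschitz reformulation of $d$, Sobolev slicing, and Lebesgue differentiation closes the argument cleanly.
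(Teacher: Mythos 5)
Your proposal is essentially correct but follows a genuinely different route from the paper. You outsource to Korevaar--Schoen the representation $E^p(u)=\int_\Omega\dashint_{S^{n-1}}\abs{u_*(\nu)}^p\,\mathrm{d}\mathcal{H}^{n-1}\,\mathrm{d}\mathcal{L}^n$ in terms of their directional energy densities, and then reduce the theorem to the pointwise identification $\abs{u_*(\nu)}=\abs{\partial_\nu u}$ a.e. The paper never introduces $\abs{u_*(\nu)}$ at all: it works directly with the approximate energies $e^u_{h,p}$. Its Step 1 is exactly the integrated form of your slicing estimate (the identity $d(u(y),u(x))=\sup_{\xi\in\mathscr{D}}\abs{d(u(y),\xi)-d(u(x),\xi)}$ plus the fundamental theorem of calculus along lines, packaged as Lemma~\ref{lem:key_estimate} and fed into Fubini); its Step 2 replaces your metric-derivative identification by the $L^p$-approximate differentiability of the countably many real-valued compositions $d(u,\xi_k)$, monotone convergence over finite maxima, and Fatou's lemma. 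What your approach buys is a conceptually clean pointwise statement (the directional density \emph{is} the Reshetnyak-type supremum); what it costs is two nontrivial external inputs from \cite{KS}: the spherical-average representation of the energy density, and --- the obstacle you correctly flag --- the identification of $\abs{u_*(\nu)}$ with the a.e.\ one-dimensional metric derivative of $u$ along a.e.\ line in direction $\nu$ (\S1.9 of \cite{KS}, or Kirchheim's theorem applied to the slices). Be aware that the latter is stated in \cite{KS} for $p>1$ and needs extra care for $p=1$, where membership in $W^{1,1}(\Omega;X)$ is defined via absolute continuity of the limit measure; the paper's Fatou/approximate-differentiability argument sidesteps this entirely because it only ever differentiates the real-valued functions $d(u,\xi_k)$, never $u$ itself. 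Also, in your ``easy'' inequality you should note that the a.e.\ bound $\abs{\nu\cdot\nabla(d(u,\xi))}\le\abs{u_*(\nu)}$ for a fixed $\xi$ follows from comparing weak-$*$ limits of the dominated approximate energy measures; this is fine but is precisely the kind of limit-measure bookkeeping the paper's direct argument avoids.
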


Theorem~\ref{t.main} is in line with the results in \cite{DS1}. However, if in the case of Almgren's multiple valued functions it is enough to sum the partial derivatives of the composition functions, in the case of a generic metric space an orthonormal frame may not be sufficient, but one needs instead to consider an average of all partial derivatives in all directions (see \S~\ref{ss.counter} for an example in which the partial derivatives do not suffice to give the harmonic energy).

\section{Sobolev Maps with Values in a Metric Space}
In this note we restrict ourself to consider the form of the $p$-energy as defined by Korevaar and Schoen \cite{KS}, the equivalence with the definition by Jost \cite{Jost} being shown in \cite{Chiron}.
Moreover, for the sake of simplicity in the exposition, we consider here only the case of maps with domain a bounded open subset $\Omega\subset\R^n$. Indeed, everything can be easily generalized to the case of domains in a Riemannian manifold $(M,g)$.

\medskip

In what follows, $(X,d)$ is a complete, separable metric space.
We denote by $L^p(\Omega;X)$ the set of measurable functions $u:\Omega\to X$ such that, for some (and hence every) $\xi \in X$, $x\mapsto d(u(x),\xi)$ is a function in $L^p(\Omega)$.
For simplicity of notation, in the following we consider the case $p\in [1,\infty[$.
$W^{1,\infty}$ maps with values in $X$ are exactly the Lipschitz maps and all the results below are actually simpler in this case.

In \cite{KS} the authors proposed a definition of Sobolev maps into a metric space starting from a family of approximate energies. The following is an equivalent, but for some aspects  simplified, definition.

\begin{definition}\label{def:harmonic_energy_according_to_KS}
Let $p\in\,[1,\infty[$ and $u\in L^p(\Omega;X)$. The \emph{Korevaar--Schoen $p$-energy} of $u$ is given by
\begin{equation*}
E^p(u)\ptdef \sup_{\substack{f\in C_c(\Omega)\\0\leq f\leq 1}}\left(
\limsup_{h \to 0} \int_{\Omega}e_{h,p}^{u}(x)f(x)\mathrm{d}x \right),
\end{equation*}
with
\begin{equation}
\label{eq:approx_energy}
e_{h,p}^{u}(x)\ptdef\begin{cases} \displaystyle c_{n,p}\int_{B_1(0)}	\frac{d^p(u(x),u(x+hv))}{h^p}\,\,\mathrm{d}\mathcal{L}^{n}(v)&	\mbox{if}\,\, x\in \Omega_h,\\
0 & \text{else},
\end{cases}
\end{equation}
where $c_{n,p}\ptdef \frac{n+p}{n\,\omega_n}$ and $\Omega_h\ptdef\{x\in \Omega \, : \, d(x,\de\Omega)>h\}$.
A map $u$ is said to belong to the Sobolev space $W^{1,p}(\Omega;X)$ for $p\in]1,\infty[$, or to the space of functions with bounded variation $BV(\Omega;X)$ for $p=1$, if $E^p(u)<+\infty$.
\end{definition}

Note that, unlike in the original paper \cite{KS}, we did not base the definition of $E^p$ on spherical averages but on ball averages -- and we also divided by $n\,\omega_n$ in order to save us from taking care of different normalization factors hereafter. 
Korevaar and Schoen proved that, when $E^p(u)<+\infty$, the measures $e_{h,p}^{u}\mathrm{d}\mathcal{L}^n$ converge weakly as $h\to0$ to the same limit measure $\mu$ as the spherical averages do \cite[Theorem 1.5.1]{KS}.
Furthermore, it is proved in \cite[Theorem~1.10]{KS} that, for $p\in]1,\infty[$, $\mu$ is absolutely continuous with respect to $\mathcal{L}^n$, i.e.~there exists some $h\in L^1(\Omega)$ such that
\begin{equation}\label{e.g}
\mu=h \,\mathrm{d}\mathcal{L}^n.
\end{equation}
In the case $p=1$, if the limit measure is absolutely continuous with respect to $\mathcal{L}^n$, then $u$ is said to belong to the Sobolev space $W^{1,1}(\Omega;X)$.

\medskip

The spaces $W^{1,p}(\Omega;X)$ can also be characterized by using the composition with Lipschitz functions of the metric space following the approach by Ambrosio \cite{Ambrosio} and Reshetnyak \cite{Reshetnyak_1}.
Indeed, as proven in \cite{Reshetnyak_2} (see also \cite[Proposition 4]{Chiron}) the following holds.

\begin{proposition}\label{p:harmonic_energy_according_to_R}
Let $p\in[1,\infty[$. Then, $u\in W^{1,p}(\Omega;X)$ if and only if
\begin{itemize}
\item[(i)] for every $\xi\in X$, the map $\Omega\ni x\mapsto d(u(x),\xi)$ belongs to $W^{1,p}(\Omega)$;
\item[(ii)] there exists $g\in L^p(\Omega)$ such that, for every $\xi\in X$,
\begin{equation}\label{e.grad}
\abs{\nabla (d(u,\xi))}\leq g\quad \mathcal{L}^n\text{-a.e.}
\end{equation}
\end{itemize}
\end{proposition}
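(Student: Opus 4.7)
The plan is to prove the two implications separately, using the countable dense subset $\mathscr{D}\subset X$ as the bridge between the scalar Sobolev condition on the distance functions and the metric Sobolev condition on $u$.

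For the direction $u\in W^{1,p}(\Omega;X)\Rightarrow$ (i) and (ii), the starting point is the one-sided triangle inequality $|d(u(x),\xi)-d(u(y),\xi)|\le d(u(x),u(y))$. Setting $f_\xi\ptdef d(u(\cdot),\xi)$, this yields the pointwise bound $e_{h,p}^{f_\xi}(x)\le e_{h,p}^{u}(x)$ on $\Omega_h$, so that the classical $p$-energy of the real valued map $f_\xi$ is dominated by $E^p(u)$. Since the Korevaar--Schoen energy of a real valued $L^p$ function coincides (up to the fixed constant $c_{n,p}$) with the usual $W^{1,p}$ norm of the gradient, (i) follows. For (ii), I would exploit the weak convergence $e_{h,p}^{u}\,\mathrm{d}\mathcal{L}^n\rightharpoonup h\,\mathrm{d}\mathcal{L}^n$ guaranteed by~\eqref{e.g}: the same pointwise bound implies that any weak limit $\mu_\xi$ of $e_{h,p}^{f_\xi}\,\mathrm{d}\mathcal{L}^n$ satisfies $\mu_\xi\le h\,\mathrm{d}\mathcal{L}^n$, and $\mu_\xi=c_{n,p}^{-1}|\nabla f_\xi|^p\mathrm{d}\mathcal{L}^n$ for real valued Sobolev functions. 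Choosing $g\ptdef (c_{n,p}\,h)^{1/p}$ yields~\eqref{e.grad}.

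For the converse, the key observation is that separability lets us pick a countable dense $\mathscr{D}=\{\xi_k\}_{k\in\N}\subset X$ and remove the countable union of exceptional sets, so that~\eqref{e.grad} holds simultaneously for every $\xi_k$ outside a single negligible set $N\subset\Omega$. Each $f_{\xi_k}$ has a representative which is absolutely continuous on $\mathcal{L}^n$-a.e.\ line segment in $\Omega$, and again after a countable union of null sets we may assume this ACL property holds on the same set of lines for all $k$. Then for $\mathcal{L}^n$-a.e.\ pair $(x,y)$ lying on such a line inside $\Omega$,
\begin{equation*}
|f_{\xi_k}(y)-f_{\xi_k}(x)|\le \int_{[x,y]}|\nabla f_{\xi_k}|\,\mathrm{d}\mathcal{H}^1\le \int_{[x,y]} g\,\mathrm{d}\mathcal{H}^1,
\end{equation*}
and taking the supremum over $k$ together with $d(u(x),u(y))=\sup_k |f_{\xi_k}(x)-f_{\xi_k}(y)|$ (by density of $\mathscr{D}$) gives the crucial pointwise chord bound $d(u(x),u(y))\le \int_{[x,y]} g\,\mathrm{d}\mathcal{H}^1$.

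Finally I would insert this bound into the approximate energy: for $x\in\Omega_h$ and $v\in B_1(0)$,
\begin{equation*}
\frac{d^p(u(x),u(x+hv))}{h^p}\le \left(\dashint_0^h g(x+tv)\,\mathrm{d}t\right)^p\le \dashint_0^h g^p(x+tv)\,\mathrm{d}t,
\end{equation*}
by Jensen. Integrating in $v$, $x$ and applying Fubini together with the change of variables $y=x+thv$ one obtains $\int_\Omega e_{h,p}^u(x)\,\mathrm{d}x\le C(n,p)\,\|g\|_{L^p(\Omega)}^p$ uniformly in $h$, whence $E^p(u)<+\infty$, i.e.~$u\in W^{1,p}(\Omega;X)$. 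The main technical obstacle is the second implication, and more precisely the transfer from the scalar Sobolev bounds on each $f_{\xi_k}$ to a pointwise metric estimate valid for a.e.\ pair of points: it is handled, as sketched, by reducing to a \emph{countable} family of distance functions via $\mathscr{D}$ and invoking the ACL characterization on a common set of full measure.
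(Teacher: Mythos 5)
The paper does not actually prove this proposition: it is imported from the literature (Reshetnyak, and \cite[Proposition 4]{Chiron}), so there is no internal proof to compare against. Your argument is the standard proof of the equivalence and I see no essential gap: the forward implication rests on the $1$-Lipschitz bound $|d(u(x),\xi)-d(u(y),\xi)|\le d(u(x),u(y))$, which dominates $e^{f_\xi}_{h,p}$ by $e^u_{h,p}$ and lets you pass the bound to the weak limit measures; the converse rests on reducing to the countable dense family $\{\xi_k\}$, choosing ACL representatives on a common full-measure family of lines, and deriving the chord estimate $d(u(x),u(y))\le\int_{[x,y]}g\,\mathrm{d}\mathcal{H}^1$ for a.e.\ pair on a.e.\ line. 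That last mechanism (sup over $\xi_k$ of line integrals of $|\nu\cdot\nabla d(u,\xi_k)|$, then Fubini) is exactly the one the paper itself uses in the proof of Lemma~\ref{lem:key_estimate}, so your route is consistent with the rest of the text.

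Two points of bookkeeping you should tighten. First, the constant: for real-valued $f\in W^{1,p}(\Omega)$ the weak limit of $e^{f}_{h,p}\,\mathrm{d}\mathcal{L}^n$ is $\lambda_{n,p}\,|\nabla f|^p\,\mathrm{d}\mathcal{L}^n$ with $\lambda_{n,p}=c_{n,p}\int_{B_1(0)}|v_1|^p\,\mathrm{d}\mathcal{L}^n(v)=\dashint_{S^{n-1}}|\nu_1|^p\,\mathrm{d}\mathcal{H}^{n-1}(\nu)$, not $c_{n,p}$ itself, so the correct choice is $g=(h/\lambda_{n,p})^{1/p}$; this is harmless but worth fixing. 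Second, the case $p=1$: by the paper's Definition~\ref{def:harmonic_energy_according_to_KS}, finiteness of $E^1$ only yields membership in $BV$, both for $u$ and for the scalar functions $f_\xi$, while the proposition asserts $W^{1,1}$. Your estimates do contain what is needed, but you should say it: in the forward direction the domination $\mu_\xi\le h\,\mathrm{d}\mathcal{L}^n$ with $h\in L^1$ (from \eqref{e.g}) forces $Df_\xi$ to be absolutely continuous, hence $f_\xi\in W^{1,1}(\Omega)$ and not merely $BV$; in the converse, your pointwise bound $d^p(u(x),u(x+hv))/h^p\le\int_0^1 g^p(x+thv)\,\mathrm{d}t$ shows, after integration and $h\to0$, that every weak limit of $e^u_{h,1}\,\mathrm{d}\mathcal{L}^n$ is dominated by a dimensional constant times $g\,\mathrm{d}\mathcal{L}^n$, which is precisely the absolute continuity required for $u\in W^{1,1}(\Omega;X)$. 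With those two remarks added, the proof is complete.
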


As shown by Reshetnyak \cite[Theorem 5.1]{Reshetnyak_1} (see also \cite[Proposition~4.2]{DS1} for the case of multiple valued functions, the proof remaining unchanged in the general case), there exists a minimal $g_{min}\in L^p(\Omega)$ such that (ii) holds: namely, if $g\in L^p(\Omega)$ satisfies \eqref{e.grad}, then $g_{min}\leq g$ $\mathcal{L}^n$-a.e. Moreover $g_{min}$ is given by the following expression:
\begin{equation}\label{eq:Resh_gradient}
g_{min}\ptdef \sup_{\xi\in\mathscr{D}}\abs{\nabla (d(u,\xi))},
\end{equation}
where $\mathscr{D}\subset X$ is any countable dense set.

Reshetnyak \cite{Reshetnyak_3} showed that in general the $p$-energy $E^p(u)$ does not coincide with $\|g_{\min}\|_{L^p}^p$.
In fact, in the case of $X=\R^m$, $E^2(u)$ equals the usual Dirichlet energy of the map $u$, while $\|g_{\min}\|_{L^2}^2$ is the integral of the square of the operator norm of $\nabla u$.
In the next section we show how to express the $p$-energy in terms of a variant of the supremum in \eqref{eq:Resh_gradient}.

\section{A representation formula for the $p$-energies}
Here we show how to recover the Korevaar--Schoen energy $E^p$ in the framework of Reshetnyak, proving Theorem~\ref{t.main}.

\subsection{Directional Derivatives}
To this purpose, we start showing the existence of a well-defined notion of the modulus of directional derivatives.

\begin{lem}
\label{lem:properties_generalized_directional_derivative}
Suppose $p\in\,[1,\infty[$, $\nu\in S^{n-1}$ and $u\in\metricsobolev$. Then, there exists a unique $g_{\nu}\in L^p(\Omega)$ such that
\begin{enumerate}
\item[(i)] for every $\xi \in X$, $\abs{\nu\cdot\nabla (d(u,\xi))}\leq g_\nu \ \mathcal{L}^n$-a.e.~in $\Omega$,
\item[(ii)] if $f\in L^p(\Omega)$ is such that, for every $\xi\in X$, $\abs{\nu\cdot\nabla (d(u,\xi))}\leq f \ \mathcal{L}^n$-a.e., then $g_\nu\leq f \ \mathcal{L}^n$-a.e. in $\Omega$.
\end{enumerate}
Moreover, the function $g_\nu$ is given by the following representation formula:
\begin{equation}
\label{eq:directional_derivative}
g_\nu\ptdef \sup_{\xi\in\mathscr{D}}\abs{\nu \cdot \nabla (d(u,\xi))},
\end{equation}
where $\mathscr{D}\subset X$ is any countable dense subset, and the map
\begin{eqnarray}\label{e.meas}
\Omega\times S^{n-1}&\to& \R,\notag\\
(x,\nu) & \mapsto& g_\nu(x),
\end{eqnarray}
belongs to $L^p(\Omega\times S^{n-1})$ for the product measure $\mathcal{L}^n\times\mathcal{H}^{n-1}$.
\end{lem}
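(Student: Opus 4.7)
The plan is to follow Reshetnyak's strategy for $g_{min}$ (see \eqref{eq:Resh_gradient}), combining the countable density of $\mathscr{D}$ with a weak compactness argument to upgrade the pointwise bound from $\xi\in\mathscr{D}$ to arbitrary $\xi\in X$. By Proposition~\ref{p:harmonic_energy_according_to_R}, for every $\xi\in X$ one has $\abs{\nu\cdot\nabla (d(u,\xi))}\leq\abs{\nabla(d(u,\xi))}\leq g_{min}\in L^p(\Omega)$, so the candidate
\[
\tilde g_\nu\ptdef \sup_{\xi\in\mathscr{D}}\abs{\nu\cdot\nabla (d(u,\xi))}
\]
is measurable (countable supremum of measurables), dominated by $g_{min}$ a.e., and therefore in $L^p(\Omega)$.

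The core step is to verify property (i) for $\tilde g_\nu$ and every $\xi\in X$, not just for $\xi\in\mathscr{D}$. I would fix such a $\xi$ and pick $\xi_k\in\mathscr{D}$ with $d(\xi_k,\xi)\to 0$. Since $\abs{d(u(x),\xi_k)-d(u(x),\xi)}\leq d(\xi_k,\xi)$ uniformly in $x$, we have $d(u,\xi_k)\to d(u,\xi)$ in $L^p(\Omega)$ and in the sense of distributions. Combined with the uniform bound $\abs{\nabla d(u,\xi_k)}\leq g_{min}$, which yields $L^p$-boundedness for $p>1$ and equi-integrability for $p=1$ (Dunford--Pettis), uniqueness of the distributional limit gives, up to a subsequence, the weak convergence $\nabla d(u,\xi_k)\rightharpoonup \nabla d(u,\xi)$ in $L^p(\Omega;\R^n)$. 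Passing to the limit in
\[
\left|\int_A \nu\cdot\nabla d(u,\xi_k)\,\mathrm{d}x\right|\leq \int_A \tilde g_\nu\,\mathrm{d}x
\]
(valid for every measurable $A\subset\Omega$) and then splitting over $\{\nu\cdot\nabla d(u,\xi)>0\}$ and $\{\nu\cdot\nabla d(u,\xi)<0\}$ yields $\abs{\nu\cdot\nabla d(u,\xi)}\leq \tilde g_\nu$ a.e.

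Property (ii) is immediate, as any admissible $f$ dominates $\abs{\nu\cdot\nabla d(u,\xi)}$ in particular for every $\xi\in\mathscr{D}$, hence $\tilde g_\nu\leq f$ a.e. Uniqueness of $g_\nu$ and the representation formula \eqref{eq:directional_derivative} then follow from (i) and (ii). For the joint measurability of $(x,\nu)\mapsto g_\nu(x)$, each map $(x,\nu)\mapsto \nu\cdot\nabla d(u(x),\xi)$ is Carath\'eodory (measurable in $x$, linear in $\nu$) and therefore jointly measurable, and the countable supremum over $\mathscr{D}$ inherits this property. Finally, the pointwise bound $g_\nu(x)\leq g_{min}(x)$ integrates to
\[
\int_{\Omega\times S^{n-1}} g_\nu(x)^p\,\mathrm{d}(\mathcal{L}^n\times\mathcal{H}^{n-1})\leq \mathcal{H}^{n-1}(S^{n-1})\,\norm{g_{min}}_{L^p(\Omega)}^p<+\infty,
\]
giving the $L^p$ assertion on $\Omega\times S^{n-1}$.

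The main obstacle is precisely the density step: since pointwise a.e.~convergence of $\nabla d(u,\xi_k)$ is unavailable, one has to convert weak $L^p$ convergence into a pointwise a.e.~inequality, which is achieved by the testing against characteristic functions of the sign sets of $\nu\cdot\nabla d(u,\xi)$ sketched above. All other ingredients (measurability of countable suprema, Carath\'eodory joint measurability, and the integral bound by $g_{min}$) are routine once this step is settled.
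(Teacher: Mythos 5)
Your proposal is correct and follows essentially the same route as the paper: approximate $\xi\in X$ by a sequence $(\xi_k)\subset\mathscr{D}$, pass the bound $\abs{\nu\cdot\nabla (d(u,\xi_k))}\leq \tilde g_\nu$ to the limit in a weak (dual) formulation using the $L^p$ convergence of $d(u,\xi_k)$ to $d(u,\xi)$, and convert it back to a pointwise a.e.\ inequality, with (ii), uniqueness, and the joint measurability handled identically. The only difference is cosmetic: the paper tests directly against $\psi\in C^\infty_c(\Omega)$ after integrating by parts, so no compactness argument is needed, whereas you route through weak $L^p$ compactness (Dunford--Pettis for $p=1$) and characteristic functions of the sign sets --- both mechanisms are valid.
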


\begin{proof}
The proof follows closely the arguments in \cite[Proposition~4.2]{DS1}.
The uniqueness is an immediate consequence of (i) and (ii). Hence, it suffices to verify that the functions $g_\nu$ defined in \eqref{eq:directional_derivative} satisfy (i) and (ii).
The latter condition follows immediately from $\abs{\nu\cdot\nabla (d(u,\xi))}\leq f$ by taking the supremum in $\mathscr{D}$.
For (i), let $(\xi_k)_{k\in\N}$ in $\mathscr{D}$ converging to $\xi$. Then $(d(u,\xi_k))_{k\in\N}$ converges in $L^p(\Omega)$ to $d(u,\xi)$ and for every $\psi\in C^\infty_c(\Omega)$:
\begin{align*}
\int_\Omega \! (\nu\cdot\nabla d(u,\xi))\,\psi\,\mathrm{d}\mathcal{L}^n & = -\int_\Omega \! d(u,\xi)\,(\nu\cdot\nabla\psi)\,\mathrm{d}\mathcal{L}^n\\
&=-\lim_{k\to\infty} \int_\Omega \! d(u,\xi_k)\,(\nu\cdot\nabla\psi)\,\mathrm{d}\mathcal{L}^n\\
&= \lim_{k\to\infty} \int_\Omega \! (\nu\cdot\nabla d(u,\xi_k))\,\psi\,\mathrm{d}\mathcal{L}^n\\
&\leq\int_\Omega \! g_\nu\,\abs{\psi}\,\mathrm{d}\mathcal{L}^n.
\end{align*}
Since $\psi$ is arbitrary, we can deduce the desired inequality.

The last part of the statement simply follows from the measurability of the maps $(x,\nu) \mapsto \abs{\nu\cdot\nabla (d(u(x),\xi))}$ for every $\xi\in \mathscr{D}$ and the bound $g_\nu \leq g_{\min}\; \mathcal{L}^n$-a.e.~for every $\nu\in S^{n-1}$, where $g_{\min}$ is given in \eqref{eq:Resh_gradient}.
\end{proof}

In the sequel we denote by $\gendir{u}$ the function $g_\nu$ from the previous lemma, which will be the building blocks in order to find an expression for the Korevaar--Schoen $p$-energies.
Before giving the proof of Theorem~\ref{t.main}, we introduce this further notation: we set $\abs{\partial_vu}\ptdef\sup_{\xi\in\mathscr{D}}\abs{v\cdot \nabla (d(u,\xi))}$ for every $v\in \R^n\setminus \{0\}$ and notice that
\[
|\de_v u| = |v|\,|\de_{\frac{v}{|v|}} u|. 
\]
One checks immediately that, for $p\in \,[1,\infty[$ and $u\in\metricsobolev$,  it holds
\[
\dashint_{S^{n-1}}\abs{\partial_\nu u}^p\,\, \mathrm{d}\mathcal{H}^{n-1}(\nu)
=c_{n,p}\int_{B_1(0)}\!\abs{\partial_vu}^p\mathrm{d}\mathcal{L}^n(v)
\quad\text{for $\mathcal{L}^n$-a.e.~$x \in \Omega$,}
\]
where $c_{n,p}$ is the constant in \eqref{eq:approx_energy}.

We start premising the following lemma.

\begin{lem}\label{lem:key_estimate}
Let $p\in \,[1,\infty[$, $u \in W^{1,p}(\Omega;X)$, $v \in B_1(0)$ and $h>0$.
Then,
\begin{equation}\label{e.incremental}
\int_{\Omega_h} d^p(u(x+h \, v),u(x))
\mathrm{d}\mathcal{L}^n(x) \leq h^p \int_\Omega \abs{\de_v u}^p \mathrm{d}\mathcal{L}^n.
\end{equation}
\end{lem}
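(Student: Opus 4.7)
The plan is to reduce the metric inequality to the scalar Sobolev setting via the composition functions $x \mapsto d(u(x),\xi)$, and then pass to the supremum over a countable dense set $\mathscr{D}$.

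First, I would fix $\xi \in \mathscr{D}$. By Proposition~\ref{p:harmonic_energy_according_to_R}, the function $f_\xi(x) \ptdef d(u(x),\xi)$ lies in $W^{1,p}(\Omega)$, so a standard absolute-continuity-on-lines argument (Fubini plus approximation by smooth functions) yields a null set $N_\xi \subset \Omega_h$ such that, for every $x \in \Omega_h \setminus N_\xi$, the map $t \mapsto f_\xi(x+tv)$ is absolutely continuous on $[0,h]$ and
\[
|f_\xi(x+hv) - f_\xi(x)| \leq \int_0^h |v\cdot\nabla f_\xi|(x+tv)\,\mathrm{d}t \leq \int_0^h |\partial_v u|(x+tv)\,\mathrm{d}t,
\]
where the last inequality uses property (i) of Lemma~\ref{lem:properties_generalized_directional_derivative} (and the fact that $x+tv \in \Omega$ whenever $x\in\Omega_h$ and $|v|<1$, $t\in[0,h]$). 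The right-hand side is independent of $\xi$.

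Next, I would take the countable union $N \ptdef \bigcup_{\xi\in\mathscr{D}} N_\xi$, still $\mathcal{L}^n$-negligible, and observe that for every $x \in \Omega_h \setminus N$,
\[
\sup_{\xi\in \mathscr{D}} |f_\xi(x+hv) - f_\xi(x)| \leq \int_0^h |\partial_v u|(x+tv)\,\mathrm{d}t.
\]
The key identification is that the left-hand side equals $d(u(x+hv),u(x))$: by the reverse triangle inequality the supremum over all $\xi \in X$ of $|d(u(x+hv),\xi)-d(u(x),\xi)|$ is bounded by $d(u(x+hv),u(x))$ and attained at $\xi = u(x)$, while the density of $\mathscr{D}$ together with the $1$-Lipschitz dependence on $\xi$ allows us to replace the supremum over $X$ by the supremum over $\mathscr{D}$.

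Finally, raising to the $p$-th power and applying Hölder's (equivalently Jensen's) inequality to the time integral,
\[
d^p(u(x+hv),u(x)) \leq h^{p-1}\int_0^h |\partial_v u|^p(x+tv)\,\mathrm{d}t,
\]
and integration over $x\in\Omega_h$, Fubini, and the translation invariance of $\mathcal{L}^n$ (each translate $\Omega_h + tv$ is contained in $\Omega$) yield
\[
\int_{\Omega_h} d^p(u(x+hv),u(x))\,\mathrm{d}\mathcal{L}^n(x) \leq h^{p-1}\int_0^h \int_\Omega |\partial_v u|^p\,\mathrm{d}\mathcal{L}^n\,\mathrm{d}t = h^p \int_\Omega |\partial_v u|^p\,\mathrm{d}\mathcal{L}^n,
\]
which is \eqref{e.incremental}. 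The main subtlety I expect is the uniform-in-$\xi$ handling of the exceptional null sets for the Sobolev representative along lines, resolved precisely because $\mathscr{D}$ is countable and $|\partial_v u|$ dominates every $|v\cdot\nabla f_\xi|$ simultaneously.
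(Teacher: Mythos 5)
Your proof is correct and follows essentially the same route as the paper's: the fundamental theorem of calculus along lines for each composition $d(u,\xi)$, domination of $|v\cdot\nabla d(u,\xi)|$ by $|\partial_v u|$, the identification $d(u(x+hv),u(x))=\sup_{\xi\in\mathscr{D}}|d(u(x+hv),\xi)-d(u(x),\xi)|$ over a countable dense set, then Jensen and Fubini. The only (immaterial) difference is that you take the supremum over $\xi$ before applying Jensen's inequality whereas the paper does it in the opposite order, and you spell out the countable-union-of-null-sets bookkeeping that the paper leaves implicit.
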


\begin{proof}
For every $\xi \in X$ and $\mathcal{L}^n\textit{-a.e. } x$ in $\Omega_h$, by the differentiability of Sobolev functions on almost every line, it holds
\begin{align}\label{e.direct incr}
\left\vert d(u(x+h\, v),\xi) - d(u(x), \xi) \right\vert^p & =  \left \vert \int_0^1 h\,v \cdot \nabla d(u(x+ t\, h\, v),\xi)\,\mathrm{d}t \right\vert^p \notag\\
 & \leq h^p \int_0^1 |\de_v u|^p(x+ t\, h\, v)\,\mathrm{d}t.
\end{align}
Since for every countable dense $\mathscr{D}\subset X$,
\[
d(u(x+h\,v),u(x)) = \sup_{\xi\in\mathscr{D}}\abs{d(u(x+h\,v),\xi)-d(u(x),\xi)} \quad \text{for } \mathcal{L}^n\textit{-a.e. } x\in\Omega_h,
\]
we infer from \eqref{e.direct incr} that for $\mathcal{L}^n\textit{-a.e. } x$ in $\Omega_h$:
\begin{equation}\label{e.direct incr 2}
 d^p(u(x+h\,v),u(x)) \leq h^p \int_0^1 |\de_v u|^p(x+ t\, h\, v)\,\mathrm{d}t.
\end{equation}
Integrating over $x$ and applying Fubini's theorem, we deduce easily \eqref{e.incremental}.
\end{proof}

\subsection{Proof of Theorem~\ref{t.main}}
For $p\in \,[1,\infty[$ and $u\in\metricsobolev$, we set
\begin{align*}
\mathcal{E}^p(u)&\ptdef	\int_\Omega
\dashint_{S^{n-1}}\abs{\partial_\nu u}^p\,\, \mathrm{d}\mathcal{H}^{n-1}(\nu)\,\mathrm{d}\mathcal{L}^n\\
&= c_{n,p} \int_\Omega \int_{B_1(0)}\!\abs{\partial_v u}^p\mathrm{d}\mathcal{L}^n(v)\,\mathrm{d}\mathcal{L}^n.
\end{align*}

\medskip

In order to prove Theorem~\ref{t.main}, we need to show that $E^p(u) = \mathcal{E}^p(u)$. We proceed in two steps.

{\sc Step 1}: $E^p(u)\leq\mathcal{E}^p(u)$.
Fix some $h>0$ and $f\in C_c(\Omega)$ with $0\leq f \leq 1$. Then,
\begin{align*}
\int_{\Omega}e_{h,p}^{u}(x)\,f(x)\,\mathrm{d}\mathcal{L}^n(x) &=
\int_{\Omega_h}\!\!\!c_{n,p}\int_{B_1(0)} \frac{d^p(u(x+hv),u(x))}{h^p}\,\,\mathrm{d}\mathcal{L}^n(v)\,f(x)\,\mathrm{d}\mathcal{L}^n(x)\\
&\leq c_{n,p} \int_{\Omega_h} \int_{B_1(0)} \frac{d^p(u(x+hv),u(x))}{h^p}\,\,\mathrm{d}\mathcal{L}^n(v)\,\mathrm{d}\mathcal{L}^n(x)\\
&=c_{n,p}\int_{B_1(0)} \int_{\Omega_h} \frac{d^p(u(x+hv),u(x))}{h^p}\,\,\mathrm{d}\mathcal{L}^n(x)\,\mathrm{d}\mathcal{L}^n(v),
\end{align*}
where we used Fubini's theorem in the last equality. Hence, by \eqref{e.incremental} in Lemma~\ref{lem:key_estimate}, we can infer
\begin{align*}
\int_{\Omega}e_{h,p}^{u}(x)\,f(x)\,\mathrm{d}\mathcal{L}^n(x) 
&\stackrel{\hidewidth \eqref{e.incremental} \hidewidth}{\leq}c_{n,p}			\int_{B_1(0)}\int_{\Omega} \abs{\partial_v u}^p(x) \,\mathrm{d}\mathcal{L}^n(x)\,
\mathrm{d}\mathcal{L}^n(v)\\
&=\mathcal{E}^p(u).
\end{align*}
As $\mathcal{E}^p(u)$ is independent of $h$ and $f$, we get the desired inequality by passing into the limit in $h\to 0$ and then taking the supremum over all $f\in C_c(\Omega)$ with $0\leq f \leq 1$. 

\medskip

{\sc Step 2}: $\mathcal{E}^p(u) \leq E^p(u)$.
For every $\epsi>0$, we fix $h_\epsi>0$ small enough to have
\begin{displaymath}
\mathcal{E}^p(u)\leq \int_{\Omega_{h_\epsi}}\!\!\!\!\!
c_{n,p} \int_{B_1(0)} \abs{\partial_v u}^p(x) \,\mathrm{d}\mathcal{L}^n(v) 	\mathrm{d}\mathcal{L}^n(x) + \epsi.
\end{displaymath}
Then, we pick $f\in C_c(\Omega)$ with $0\leq f\leq 1$ and $\restriction{f}{\Omega_{h_\epsi}}=1$ (which exists by Urysohn's lemma -- see, for instance, \cite[Lemma~2.12]{Rudin}). It follows that
\begin{align}\nonumber
\mathcal{E}^p(u)\leq& \int_{\Omega_{h_\epsi}}\!\!c_{n,p}\left(
\int_{B_1(0)} \abs{\partial_v u}^p(x) \mathrm{d}\mathcal{L}^n(v)
\right)f(x)\,\mathrm{d}\mathcal{L}^n(x) + \epsi\\
=& \int_{\Omega_{h_\epsi}}\!\!c_{n,p}\left(
\int_{B_1(0)} \sup_{k\in\N}\abs{v\cdot\nabla (d(u(x),\xi_k))}^p \mathrm{d}\mathcal{L}^n(v)
\right)f(x)\,\mathrm{d}\mathcal{L}^n(x) + \epsi,
\label{anfang}
\end{align}
where $\{\xi_k\}_{k\in\N}\subset X$ is any dense subset.
Using monotone convergence, we can rewrite the interior integral in the following way: for $\mathcal{L}^n\textit{-a.e. } x$ in $\Omega_{h_\epsi}$,
\begin{multline*}
\int_{B_1(0)} \sup_{k\in\N}\abs{v\cdot\nabla (d(u(x), \xi_k))}^p \mathrm{d}\mathcal{L}^n(v)\\
=\lim_{N\to\infty}
\int_{B_1(0)} \max_{1\leq k\leq N}\abs{v\cdot\nabla (d(u(x), \xi_k))
}^p \mathrm{d}\mathcal{L}^n(v).
\end{multline*}
On the other hand, by the $L^p$-approximate differentiability of Sobolev functions (see, for example, \cite[6.1.2]{EG}), we know that, for $\mathcal{L}^n\textit{-a.e. } x$ in $\Omega_{h_\epsi}$, the incremental quotients
\[
R_{h,k}(v):=\frac{d(u(x+h\,v),\xi_k) - d(u(x),\xi_k)}{h},
\]
converge in $L^p(B_1(0))$ as $h\to0$ to the linear function $L_k(v) = v\cdot\nabla (d(u(x), \xi_k))$. Therefore, it follows that
\begin{align*}
\int_{B_1(0)} \max_{1\leq k\leq N}\abs{v\cdot\nabla (d(u(x), \xi_k))
}^p \mathrm{d}\mathcal{L}^n(v) = \lim_{h\to 0} \int_{B_1(0)} \max_{1\leq k\leq N}|R_{h,k}(v)|^p\,\mathrm{d}\mathcal{L}^n(v)\\
\leq \liminf_{h\to 0} \int_{B_1(0)}\abs{\frac{d(u(x+hv),u(x))}{h}}^p \mathrm{d}\mathcal{L}^n(v),
\end{align*}
where we used the triangle inequality
\[
d(u(x+h\,v),\xi_k) - d(u(x),\xi_k)\leq d(u(x+h\,v),u(x)).
\]

Combining this estimate with inequality (\ref{anfang}), we deduce:
\begin{align*}	
\mathcal{E}^p(u)&\leq \int_{\Omega_{h_\epsi}}\!\!c_{n,p}
\liminf_{h\to 0} \left(\int_{B_1(0)}\abs{\frac{d(u(x+hv),u(x))}{h}}^p \mathrm{d}\mathcal{L}^n(v)\right)
f(x)\,\mathrm{d}\mathcal{L}^n(x) + \epsi\\
&\stackrel{\hidewidth(\ast)\hidewidth}{\leq} \liminf_{ h\to 0}
\int_{\Omega_{h_\epsi}}\!\!c_{n,p}
\int_{B_1(0)}\abs{\frac{d(u(x+hv),u(x))}{h}}^p \mathrm{d}\mathcal{L}^n(v)
f(x)\,\mathrm{d}\mathcal{L}^n(x) + \epsi,\\
&\leq  \liminf_{h\to 0}
\int_{\Omega}\!\!
e^u_{h,p}(x)\,f(x)\,\mathrm{d}\mathcal{L}^n(x) + \epsi\\
& \leq E^p(u)+\epsi,
\end{align*}
where we used Fatou's lemma in $(\ast)$.
As $\epsi>0$ was arbitrary, we deduce the desired inequality.
\qed

Theorem~\ref{t.main} gives an explicit representation formula for the $p$-energy of any function $u\in\metricsobolev$, as well for the limiting energy density $h$ by Korevaar and Schoen in \eqref{e.g}. Indeed, we can localize the equality between the energies in any subdomain $\Omega'\subset\Omega$, thus leading to the equality between the densities:
\[
h = \dashint_{S^{n-1}}\abs{\partial_\nu u}^p\,\, \mathrm{d}\mathcal{H}^{n-1}(\nu).
\]

\subsection{An example}\label{ss.counter}
As explained in the Introduction, a first special instance of a formula for the Dirichlet energy in terms of Lipschitz compositions is the one for Almgren's multiple valued functions in \cite{DS1}.
In this case, indeed, it is enough to sum the values of the directional derivatives along an orthonormal frame.
Here we show that in general this is not enough.

To this purpose, consider $X\ptdef(\R^2,d_\infty)$, where $d_\infty$ is the distance induced by the maximum norm, i.e.~ for any $\xi=(\xi_1,\xi_2)$, $\eta=(\eta_1,\eta_2)\in\R^2$,
\[
d_\infty(\xi,\eta)\ptdef\max\{\abs{\xi_1-\eta_1},\abs{\xi_2-\eta_2}\}. 
\]
The following elementary lemma shows that a map whose components are classical Sobolev functions belongs actually to $W^{1,p}(\Omega;X)$.

\begin{lem}\label{lem:aux_for_counterex}
Let $\Omega\subset\R^n$ be an open, bounded set, and $f_1,f_2\in W^{1,p}(\Omega)$.
Then, the map $u\ptdef(f_1,f_2)$ belongs to $W^{1,p}(\Omega;X)$ and, for every $\nu \in S^1$, 
\[
\gendir{u}(x)=\max\{\abs{\nu\cdot\nabla f_1(x)},\abs{\nu\cdot\nabla f_2(x)}\} \quad \text{for } \;\mathcal{L}^n\text{-a.e.} \; x\in\Omega. 
\]
\end{lem}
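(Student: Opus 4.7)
The plan is to verify membership via Proposition~\ref{p:harmonic_energy_according_to_R} and then compute $\gendir{u}$ explicitly by combining Stampacchia's lattice rule for Sobolev functions with a density argument that isolates $\nabla f_1$ and $\nabla f_2$ separately through a judicious choice of $\xi\in\mathscr{D}$.

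First, for any $\xi=(\xi_1,\xi_2)\in\R^2$, write $d_\infty(u(x),\xi)=\max\{|f_1(x)-\xi_1|,|f_2(x)-\xi_2|\}$, which lies in $W^{1,p}(\Omega)$ as the maximum of two $W^{1,p}$-functions. Stampacchia's chain rule for the maximum (and for absolute value) yields, for $\mathcal{L}^n$-a.e.~$x$,
\[
\nabla d_\infty(u,\xi)(x)=
\begin{cases}
\mathrm{sgn}(f_1(x)-\xi_1)\,\nabla f_1(x), & |f_1(x)-\xi_1|>|f_2(x)-\xi_2|,\\
\mathrm{sgn}(f_2(x)-\xi_2)\,\nabla f_2(x), & |f_2(x)-\xi_2|>|f_1(x)-\xi_1|,
\end{cases}
\]
while on the coincidence set both candidates agree a.e. In any case $|\nabla d_\infty(u,\xi)|(x)\leq\max\{|\nabla f_1(x)|,|\nabla f_2(x)|\}\in L^p(\Omega)$ uniformly in $\xi$, so conditions (i) and (ii) of Proposition~\ref{p:harmonic_energy_according_to_R} are satisfied and $u\in\metricsobolev$.

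Applying $\nu\,\cdot$ to the displayed identity and taking absolute values gives $\abs{\nu\cdot\nabla d_\infty(u,\xi)}(x)\leq\max\{\abs{\nu\cdot\nabla f_1(x)},\abs{\nu\cdot\nabla f_2(x)}\}$ for every $\xi$, and supremizing over a countable dense $\mathscr{D}$ yields the $\leq$ inequality of the lemma. For the reverse inequality I would take $\mathscr{D}=\mathbb{Q}^2$, so that the chain-rule identity above holds for every $\xi\in\mathscr{D}$ outside a single null set $N=\bigcup_{\xi\in\mathscr{D}}N_\xi$. Given $x\notin N$, assume WLOG $\abs{\nu\cdot\nabla f_1(x)}\geq\abs{\nu\cdot\nabla f_2(x)}$ and pick $\xi^{(k)}=(q_k,0)\in\mathscr{D}$ with rational $q_k\to-\infty$: then $|f_1(x)-q_k|>|f_2(x)|$ for all large $k$, the first branch of the formula applies at $x$, and $\abs{\nu\cdot\nabla d_\infty(u,\xi^{(k)})}(x)=\abs{\nu\cdot\nabla f_1(x)}$. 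The symmetric choice $\xi^{(k)}=(0,q_k)$ realizes $\abs{\nu\cdot\nabla f_2(x)}$, so the supremum over $\mathscr{D}$ attains the claimed maximum.

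The only delicate step is the null-set bookkeeping: Stampacchia's formula for $\nabla d_\infty(u,\xi)$ is valid only off a $\xi$-dependent null set, so one must first fix a countable dense $\mathscr{D}$ and pool those null sets into a single $N$ before running the pointwise argument. After that, the freedom to drive $|f_1(x)-\xi_1|$ and $|f_2(x)-\xi_2|$ apart in either direction by varying $\xi$ along rational rays delivers the matching lower bound.
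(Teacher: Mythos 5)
Your proof is correct and follows essentially the same route as the paper: Stampacchia's lattice rule gives the pointwise formula for $\nabla d_\infty(u,\xi)$, the $\xi$-independent bound $\max\{\abs{\nabla f_1},\abs{\nabla f_2}\}$ yields membership in $\metricsobolev$ via Proposition~\ref{p:harmonic_energy_according_to_R}, and the directional formula follows from the same case distinction. In fact you supply a detail the paper leaves implicit (the lower bound via $\xi^{(k)}=(q_k,0)$ with $q_k\to-\infty$ to force each branch, after pooling the countably many null sets), which is exactly the right way to justify the paper's closing remark that ``the remaining claim follows also from \eqref{eqn:gendir_formula}.''
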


\begin{proof}
Fix any $\xi=(\xi_1,\xi_2)\in\R^2$. As $d(u(x),\xi)=\max\{\abs{f_1(x)-\xi_1},\abs{f_2(x)-\xi_2}\}$ for $\mathcal{L}^n$-a.e. $x\in\Omega$, we see that $d(u,\xi)$ belongs to $W^{1,p}(\Omega)$. Since for $\mathcal{L}^n$-a.e. $x\in\Omega$ we have:

\begin{equation}
\label{eqn:gendir_formula}
|\nabla (d(u(x),\xi))|=\begin{cases}
	|\nabla f_1(x)|&	\mbox{if}\,\, \abs{f_1(x)-\xi_1}> \abs{f_2(x)-\xi_2},\\
	|\nabla f_2(x)|	&	\mbox{else},
\end{cases}
\end{equation}
one can estimate $\abs{\nabla d(u,\xi)}$ by $g\ptdef\max\{\gengrad{f_1},\gengrad{f_2}\}$. As $g$ is independent of $\xi$ and since $g$ belongs to $L^p(\Omega)$ we infer that $u$ belongs to $W^{1,p}(\Omega;X)$. Moreover, we see that the remaining claim follows also from $(\ref{eqn:gendir_formula})$. 
\end{proof}

Now, using the previous lemma, we can easily find an  example where the sum of the partial derivatives on a frame does not equal the harmonic energy.
Consider, for instance, the map $u\ptdef(f_1,f_2):\Omega\subset\R^2\to\R^2$, where $f_i(x_1,x_2)\ptdef x_i$ for $i\in\{1,2\}$. Applying Lemma \ref{lem:aux_for_counterex}, we get $\gendir{u}=\max\{\abs{\nu_1},\abs{\nu_2}\}$. Hence, we infer that
\[
2=\abs{\partial_{e_1} u}^2(x)+\abs{\partial_{e_2} u}^2(x) > \dashint_{S^1}\gendir{u}^2(x) \, \mathrm{d}\mathcal{H}^1(\nu)=\frac{2+\pi}{2\,\pi}.
\]

\nocite{*}
\bibliographystyle{plain}
\bibliography{biblio-metric}

\end{document}